\newtheorem{theorem}{Theorem}
\newtheorem{lemma}{Lemma}
\newtheorem{proposition}{Proposition}
\theoremstyle{remark}
\title{The Ruelle operator for symmetric $\beta$-shifts}
\author{Artur O. Lopes\thanks{IME - UFRGS. Partially supported by CNPq}\, and Victor Vargas\thanks{IME - UFRGS. Supported by PNPD-CAPES grant.} 
}
\begin{document}

\maketitle

\begin{abstract}
Consider $m \in \mathbb{N}$ and $\beta \in (1, m + 1]$. Assume that $a\in \mathbb{R}$ can be represented in base $\beta$ using a development in series $a = \sum^{\infty}_{n = 1}x(n)\beta^{-n}$ where the sequence $x = (x(n))_{n \in \mathbb{N}}$ take values in the alphabet $\mathcal{A}_m := \{0, \ldots, m\}$. The above expression is called the $\beta$-expansion of $a$ and it is not necessarily unique.  We are interested in  sequences $x = (x(n))_{n \in \mathbb{N}} \in \mathcal{A}_m^\mathbb{N}$  which are associated to all possible values $a$ which have a unique expansion. We denote the set of such $x$ (with some more  technical restrictions) by $X_{m,\beta} \subset\mathcal{A}_m^\mathbb{N}$. The space $X_{m, \beta}$ is called the symmetric $\beta$-shift associated to the pair $(m, \beta)$. It is invariant by the shift map but in general it is not a subshift of finite type.
Given  a H\"older continuous potential $A:X_{m, \beta} \to\mathbb{R}$, we consider the Ruelle operator $\mathcal{L}_A$  and we show the existence of a  positive eigenfunction $\psi_A$ and an eigenmeasure $\rho_A$ for some values of $m$ and $\beta$. We also consider a variational principle of pressure. Moreover, we prove that the family of  entropies $(h(\mu_{tA}))_{t>0}$ converges, when $t \to\infty$, to the maximal value among the set of all possible values of entropy of all $A$-maximizing probabilities.
\end{abstract}

{\footnotesize {\bf Keywords:} $\beta$-expansions, equilibrium states, Gibbs states, Ruelle operator, symmetric $\beta$-shifts.}

\vspace*{2mm}

{\footnotesize {\bf Mathematics Subject Classification (2010):}  11A63, 28Dxx, 37A35, 37D35.}

\vspace*{2mm}

\section{Introduction}

Statistical Mechanics and Thermodynamic Formalism are branches of mathematics which are interested in the study of properties of systems of particles defined on lattices, whose interactions are defined from potentials taking real  values. Usually it is assumed that the potential is at at least continuous. One of the main topics of interest  is the study of Gibbs states and equilibrium states. There are several papers concerning the existence and uniqueness of equilibrium probabilities  in the case the potential  is H\"older continuous. 
In the most part of these works the important ergodic properties are derived  from properties of the eigenprobabilities and eigenfunctions of the Ruelle operator associated to $A$.

The seminal work using this approach was presented by Ruelle in \cite{MR0234697} as an instrument for the study of thermodynamic properties of systems defined on uni-dimensional lattices. From this paper several important results were derived. Bowen, Sinai, Parry and Pollicott made  important contributions for compact finite type subshifts (see \cite{MR1085356} for a nice presentation of the theory) and they also get interesting results in number theory.  In more general cases, such as the case of  symbolic spaces with a countable number of spins, Mauldin and Urba\'nski in \cite{MR1853808} and Sarig in \cite{MR1738951}, made some very important contributions on the noncompact countable setting (see also \cite{BG}, \cite{BF} and \cite{MR3864383}). On other hand, in  \cite{MR3377291}, \cite{MR3538412} and \cite{Lea} are presented advances regarding Thermodynamic Formalism and problems of selection at zero temperature in the classical $XY$-model (the case where the set of spins is not countable) in a compact setting.

Consider $m \in \mathbb{N}$ and $\beta \in (1, m + 1]$, we are interested in representing a real number $a$ in base $\beta$ using a development in series of the form $a = \sum^{\infty}_{n = 1}x(n)\beta^{-n}$, where the sequence $x = (x(n))_{n \in \mathbb{N}}$ take values in the alphabet $\mathcal{A}_m := \{0, \ldots, m\}$. The above expression is called the {\bf $\beta$-expansion of $a$} which is obviously not necessarily unique (see \cite{MR1153488}). 

From now on, we  denote by $\mathcal{U}_m$ for the set of real numbers $\beta$ belonging to the interval $(1, m + 1]$ for which the number  $1$ has a unique $\beta$-expansion - this set is known as {\bf set of univoque bases}. 

We are going to use the notation $\mathcal{U}_{m, \beta}$ for denoting the set of numbers in the interval $\left[0,\frac{m}{\beta - 1}\right]$ that have a {\bf unique $\beta$-expansion}, with $\beta \in (1, m + 1]$.  

In general terms, we are interested here, for a fixed value of $\beta\in \mathcal{U}_m$, on the strings 
$x = (x(n))_{n \in \mathbb{N}} \in \mathcal{A}_m^\mathbb{N}$ which are obtained from values $a$ which have an unique $\beta$ expansion (we will be more precise in a moment). We will consider here the action of the shift in the set of such strings $x$ and the corresponding  ergodic  properties of shift invariant probabilities.

The dynamical systems  which we consider here is widely known in the literature as the symmetric $\beta$-shifts. They were introduced by Sidorov in \cite{MR1851269} as a generalization of the classical $\beta$-shifts (see \cite{MR2180243} and \cite{MR0466492}) for the case $\beta \in (1, 2)$. 

The $\beta$-expansions of real numbers  is one of the main topics of interest in number theory (classical contributions  were presented by Erd\"os, Parry and R\'enyi in \cite{MR1078082}, \cite{MR0142719}, and \cite{MR0097374}. 

In \cite{MR3223814} it is studied the topological properties of symmetric $\beta$-shifts when $\beta \in (1, 2)$. In the mentioned  work are presented topological properties of these subshifts, characterizing them from the behavior of the $\beta$-expansions. In a more recent work \cite{MR3896110} Alcaraz et al. extended these results for a more generalized context, when $\beta \in (1, m + 1]$, for some arbitrary natural number $m$.

Our aim here is to understand the Thermodynamic Formalism for this model. A special analysis  (see section \ref{preliminaries-section}) will be required in order to show that the Ruelle operator is well defined (we have to take care of the preimages of a given point $x$ on the shift space in order to get a local homeomorphism).
 
More precisely,  we are interested in study of properties of the Ruelle operator defined in the context of symmetric $\beta$-shifts: existence and uniqueness of Gibbs states associated to a H\"older continuous potential.
\medskip

First we will present the definition of the shift space (it is not a shift of finite type) which will be the main focus of our paper
 
Given $m$ the {\bf generalized golden ratio} is defined as
\begin{equation*}
\mathcal{G}(m) =
\begin{cases}
k + 1 &, m = 2k \\
\frac{k + 1 + \sqrt{k^2 + 6k + 5}}{2} &, m = 2k + 1 
\end{cases} 
\end{equation*}

This number satisfies $\mathcal{U}_{m, \beta} \neq \emptyset$ for any $\beta \in (\mathcal{G}(m), m + 1]$ and $\mathcal{U}_{m, \beta} = \emptyset$ for each $\beta \in (1, \mathcal{G}(m))$ (see for instance \cite{MR3653101}). 

We will define the symmetric $\beta$-shifts from the sets $\mathcal{U}_m$ and $\mathcal{U}_{m, \beta}$ (with some extra conditions) for values $\beta \in (\mathcal{G}(m),m+1]$. This setting was introduced in \cite{MR3570134} in a work mainly interested in the study of topological properties of univoque sets. 

Set $X_m := (\mathcal{A}_m)^{\mathbb{N}}$ equipped with the usual lexicographic order $\prec$, which is defined as $x \prec y$, if and only if, there exists $n \in \mathbb{N}$, such that, $x(j) = y(j)$, for all $j < n$ and $x(n) < y(n)$. 

Set $\sigma: X_m \to X_m$ the shift map defined by $\sigma((x(n))_{n \in \mathbb{N}}) = (x(n+1))_{n \in \mathbb{N}}$. Henceforth, we will denote by $\overline{x} = (\overline{x(n)})_{n \in \mathbb{N}}$ its corresponding reflection, that is, $\overline{x(n)} = m - x(n)$, for each $n \in \mathbb{N}$. Besides that, for any finite word $\omega = \omega(1) \ldots \omega(l)$ we will define its reflection as $\overline{\omega} = (m - \omega(1)) \ldots (m - \omega(l))$. We will also use the following notation: $\omega^+ = \omega(1) \ldots (\omega(l) + 1)$ and, finally,  $\omega^{\infty}$, for the periodic sequence $(x(n))_{n \in \mathbb{N}}$ satisfying $x(kl + i) = \omega(i)$, for each $k \in \mathbb{N} \cup \{0\}$ and any $i \in \{1, \ldots, l\}$. 

Note that the definition of $\omega^+$ just make sense in the case that $\omega(l) \neq m$, and the definition of $\omega^-$ just make  sense in the case that $\omega(l) \neq 0$.

We will say that a sequence $x$ is infinite, if it has not a tail of the form $0^{\infty}$, in other case, we will say that the sequence $x$ is finite. 

We name {\bf greedy (resp. lazy) $\beta$-expansion} of a number $a \in \left[0, \frac{m}{\beta - 1}\right]$, to the largest (resp. smallest) sequence, regarding the lexicographic order, in the set of all possible $\beta$-expansions of $a$, which can be either, a finite or an infinite sequence. Observe that both of these sequences lazy and greedy agree, if and only if, the real number $a$ has a unique $\beta$-expansion. 

We are going to name {\bf quasi-greedy (resp. quasi-lazy) $\beta$-expansion} of a number $a \in \left[0, \frac{m}{\beta - 1}\right]$, to the largest (resp. smallest) infinite sequence, with respect to the lexicographic order, in the set of all possible $\beta$-expansions of $a$. Note that when the greedy (resp. lazy) $\beta$-expansion of a number $a$ is infinite, it agrees with the quasi-greedy (resp. quasi-lazy) $\beta$-expansion of the number $a$. 

A typical example of the above definitions is the following. Taking $\beta = \frac{1 + \sqrt{5}}{2}$, we have that the lazy $\beta$-expansion of $1$ is $x = 01^{\infty}$ and the greedy $\beta$-expansion of $1$ is $x = 110^{\infty}$. Furthermore, in this example the quasi-lazy $\beta$-expansion of $1$ coincides with the lazy $\beta$-expansion of $1$ because it is infinite and the quasi-greedy $\beta$-expansion of $1$ is $x = (10)^{\infty}$. 

Set $x^{m, \beta}$ the quasi-greedy $\beta$-expansion of $1$, from the greedy algorithm, it is easy to verify that $\overline{x^{m, \beta}}$ is the quasi-lazy $\beta$-expansion of $\frac{m}{\beta - 1} - 1$ when $\beta \in (\frac{m}{2} + 1, m + 1]$. 

Fixing $\beta \in (\mathcal{G}(m), m + 1]$, we define the set $\mathcal{W}_{m, \beta} = \mathcal{U}_{m, \beta} \cap \left(\frac{m}{\beta - 1} - 1, 1\right)$, and $\pi_{m, \beta} : X_m \to \left[0, \frac{m}{\beta - 1}\right]$ as the map assigning the real number $\sum^{\infty}_{n = 1}x(n)\beta^{-n}$ to each sequence $(x(n))_{n \in \mathbb{N}} \in X_m$. It is also easy to verify that 
\[
\pi_{m, \beta}^{-1}(\mathcal{W}_{m, \beta}) = \{x \in X_m : \overline{x^{m, \beta}} \prec \sigma^k x \prec x^{m, \beta}, \forall k \in \mathbb{N} \cup \{0\}\} \,.
\]

From the above, we can define the {\bf symmetric $\beta$-shift} associated to the pair $(m, \beta)$ as the $\sigma$-invariant set  
\begin{equation}
X_{m, \beta} = \{x \in X_m : \overline{x^{m, \beta}} \preceq \sigma^k x \preceq x^{m, \beta}, \forall k \in \mathbb{N} \cup \{0\}\} \,.
\label{symmetric-beta-shift}
\end{equation}

A more detailed analysis of these dynamical systems can be found in \cite{MR3223814, MR3570134}. Through this paper we are going to use the following metric on $X_{m, \beta}$
\[
d(x, y) = 2^{-\min\{n \in\mathbb{N} : x(n) \neq y(n)\} + 1} \,. 
\]

It is easy to check that for any $m \in \mathbb{N}$ and each $\beta \in (\mathcal{G}(m), m + 1]$ the metric space $(X_{m, \beta}, d)$ is a bounded metric space. 

 Moreover, in \cite{MR3570134} it was proved that $X_{m, \beta}$ is a compact $\sigma$-invariant subset of $X_m$. 

From now on, we will denote by $\sigma = \sigma|_{X_{m, \beta}}$. Observe that $(X_{m, \beta}, \sigma)$ is a compact subshift.

 In \cite{MR3896110} it was defined the set of irreducible sequences, that is, the set of sequences $(x(n))_{n \in \mathbb{N}} \in X_{m, \beta}$ satisfying 
\begin{equation}
x(1) \ldots x(j) (\overline{x(1) \ldots x(j)}^+)^{\infty} \prec (x(n))_{n \in \mathbb{N}}, \forall j \in \mathbb{N} \,.
\label{irreducible-sequence}
\end{equation}

Besides that, we define $\beta_T$ as the unique number belonging to the interval $(1, m + 1]$ satisfying that the quasi-greedy $\beta_T$-expansion of $1$ is given by 
\begin{equation}
x^{m, \beta_T} :=
\begin{cases}
(k + 1)k^{\infty}, & m = 2k \\
(k + 1)((k + 1)k)^{\infty}, & m = 2k + 1  
\end{cases} \nonumber
\label{transitive-base}
\end{equation}

One of the main results in \cite{MR3896110} claims that for any $\beta \in (\mathcal{G}(m), m + 1] \cap \overline{\mathcal{U}_m}$, the symmetric $\beta$-shift $X_{m, \beta}$ is a topologically transitive subshift, if and only if, the quasi-greedy $\beta$-expansion of 1 is an irreducible sequence, or $\beta = \beta_T$.

The so called  {\bf transitivity condition} will be necessary to guarantee existence of an strictly positive eigenfunction and an eigenmeasure associated to the Ruelle operator. Henceforth, we are going to assume that either $\beta \in (\mathcal{G}(m), m + 1] \cap \overline{\mathcal{U}_m}$, with $x^{m, \beta}$ an irreducible  sequence, or, $\beta = \beta_T$. 

We denote by $\mathcal{M}_{\sigma}(X_{m, \beta})$ the set of {\bf $\sigma$-invariant probabilities} for the shift acting on $ X_{m, \beta}$, that is, the set of probability measures satisfying $\mu(E) = \mu(\sigma{-1}(E))$ for any Borelian set $E \subset X_{m, \beta}$.

Our first result provides conditions to guarantee the existence of Gibbs states for H\"older potentials defined on the symmetric $\beta$-shift $X_{m, \beta}$ associated to certain values  of $m \in \mathbb{N}$ and $\beta \in (\mathcal{G}(m), m + 1]$. The statement of the main result is the following:

\begin{theorem}
Consider $X_{m, \beta}$ a symmetric $\beta$-shift satisfying the transitivity condition. Let $A : X_{m, \beta} \to \mathbb{R}$ be a H\"older continuous potential. There exists a class of possible values of $m$ and $\beta$, such that, there exists $\lambda_A > 0$ and $\psi_A : X_{m, \beta} \to \mathbb{R}$, a strictly positive H\"older continuous function, such that, $\mathcal{L}_A(\psi_A) = \lambda_A\psi_A$.

The eigenvalue $\lambda_A$ is simple and is the maximal possible eigenvalue. Moreover, there exists a unique Radon probability measure $\rho_A$, defined on the Borelian sets of $X_{m, \beta}$, such that, $\mathcal{L}^*_A(\rho_A) = \lambda_A\rho_A$. 

The invariant probability measure $\mu_A = \psi_A d\rho_A$ is the unique fixed point of $\mathcal{L}^*_{\overline{A}}$, where $\overline{A}$ is the normalization of $A$. Furthermore, for any H\"older continuous potential $\psi : X_{m, \beta} \to \mathbb{R}$, it is satisfied the following uniform limit
\[
\lim_{n \in \mathbb{N}}\lambda^{-n}_A\mathcal{L}^n_A(\psi) = \psi_A \int_{X_{m, \beta}} \psi d\rho_A \,.
\]

We will present  explicit conditions  for values of $m$ and $\beta$ such that all the claims are valid. For instance, for $m > 2$ and $\beta \in [\frac{m}{2} + 2, m + 1]$.

\label{theorem1}
\end{theorem}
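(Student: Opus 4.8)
The plan is to prove a Ruelle-Perron-Frobenius theorem adapted to the non-Markov setting of $X_{m, \beta}$, organized in the order: (i) identify the admissible class of pairs $(m, \beta)$ and control the preimage structure, (ii) produce the eigenmeasure $\rho_A$, (iii) produce the eigenfunction $\psi_A$, and (iv) obtain simplicity, maximality and the uniform limit. First I would work with the operator in the form
\[
\mathcal{L}_A(f)(x) = \sum_{\sigma(y) = x} e^{A(y)} f(y),
\]
and verify it is a well-defined bounded positive operator preserving the cone of positive H\"older functions. The delicate point flagged in the introduction is the preimage set: a point $y = ax$ with $a \in \mathcal{A}_m$ lies in $X_{m, \beta}$ if and only if $\overline{x^{m, \beta}} \preceq ax \preceq x^{m, \beta}$, since the constraints $\sigma^k$ for $k \geq 1$ hold automatically once $x \in X_{m, \beta}$. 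The number of admissible prefixes can therefore vary with $x$, and the role of the explicit restriction (for instance $m > 2$ and $\beta \in [\frac{m}{2} + 2, m + 1]$) is precisely to isolate those pairs for which this cardinality is constant off a negligible boundary set, so that $x \mapsto \sigma^{-1}(x)$ decomposes into a fixed finite family of local inverse branches.

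Second, for the eigenmeasure I would run a Schauder-Tychonoff fixed point argument. On the weak-$*$ compact convex set of Borel probability measures on the compact space $X_{m, \beta}$, the map $\nu \mapsto \mathcal{L}_A^*(\nu)/(\mathcal{L}_A^*\nu)(X_{m, \beta})$ is continuous and hence admits a fixed point $\rho_A$. This yields $\mathcal{L}_A^*(\rho_A) = \lambda_A \rho_A$ with $\lambda_A = \int_{X_{m, \beta}} \mathcal{L}_A(1)\, d\rho_A > 0$, giving the Radon eigenmeasure and a candidate eigenvalue.

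Third, for the eigenfunction I would first establish a bounded distortion estimate for the class of pairs fixed above: using H\"older continuity of $A$ together with the controlled branch structure, the Birkhoff sums along preimage branches satisfy a uniform bound $|S_n A(y) - S_n A(y')| \leq C\, d(\sigma^n y, \sigma^n y')^\alpha$. From this the normalized iterates $\lambda_A^{-n}\mathcal{L}_A^n(1)$ form a family that is uniformly bounded, uniformly positive, and equi-H\"older; Arzel\`a-Ascoli (or a Ces\`aro averaging if needed to force genuine invariance) then produces a limit $\psi_A$ that is strictly positive, H\"older continuous, and satisfies $\mathcal{L}_A(\psi_A) = \lambda_A \psi_A$. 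Passing to the normalized potential $\overline{A} = A + \log \psi_A - \log(\psi_A \circ \sigma) - \log \lambda_A$, one checks directly that $\mathcal{L}_{\overline{A}}(1) = 1$, which is the form best suited to the final step.

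Finally, for simplicity, maximality of $\lambda_A$, and the convergence $\lambda_A^{-n}\mathcal{L}_A^n(\psi) \to \psi_A \int \psi\, d\rho_A$, I would invoke the transitivity condition to run a Birkhoff projective-cone contraction: under the mixing supplied by an irreducible $x^{m, \beta}$ (or $\beta = \beta_T$), the normalized operator $\mathcal{L}_{\overline{A}}$ contracts the Hilbert projective metric on the cone of positive H\"older functions, which forces $\lambda_A$ to be simple and dominant and delivers the stated uniform limit; the invariant measure $\mu_A = \psi_A\, d\rho_A$ is then the unique fixed point of $\mathcal{L}_{\overline{A}}^*$. I expect the main obstacle to be the combination of the first and third steps: because $X_{m, \beta}$ is in general not of finite type, the cardinality of $\sigma^{-1}(x)$ may jump as $x$ crosses sequences tied to $x^{m, \beta}$ and $\overline{x^{m, \beta}}$, and it is exactly to tame this non-Markov boundary behavior that the explicit condition on $(m, \beta)$ is imposed. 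Verifying that bounded distortion, and hence the projective contraction, survives across this boundary is where the real work of the proof will concentrate.
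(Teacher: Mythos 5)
Your proposal is correct in overall strategy, but it takes a genuinely different route from the paper's proof, so it is worth comparing the two. The paper constructs the \emph{eigenfunction first}: it applies Schauder--Tychonoff not to measures but to the function cone $\Gamma = \{\psi \in \mathcal{C}(X_{m,\beta}) : 0 \leq \psi \leq 1,\ \log\psi \in \mathcal{H}_{\alpha}(X_{m,\beta})\}$, using the regularized operators $L_k(\psi) = \mathcal{L}_A(\psi + 1/k)/\|\mathcal{L}_A(\psi + 1/k)\|_{\infty}$ to avoid vanishing denominators; the eigenvalue appears as $\lambda_A = \|\mathcal{L}_A(\psi_A)\|_{\infty}$, strict positivity of $\psi_A$ is deduced afterwards from transitivity (density of $\bigcup_n \sigma^{-n}(\{x\})$), and the eigenmeasure, simplicity, maximality and the uniform limit are delegated to the arguments of Parry--Pollicott. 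You instead take the Bowen/Walters route: eigenmeasure first by Schauder--Tychonoff on the weak-$*$ compact convex set of probabilities (which requires $\mathcal{L}_A(1) > 0$, i.e.\ exactly Lemma \ref{bigd}), then the eigenfunction from bounded distortion plus Arzel\`a--Ascoli applied to $\lambda_A^{-n}\mathcal{L}_A^n(1)$, then a Birkhoff projective-cone contraction for the spectral statements. Both routes are standard, and both hinge on the two facts the paper isolates in section \ref{preliminaries-section}: nonemptiness of every preimage set under the restriction $m > 2$, $\beta \in [\frac{m}{2}+2, m+1]$, and local constancy of $x \mapsto \mathcal{A}_m(x)$, which is what makes $\mathcal{L}_A$ preserve H\"older functions. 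What the paper's regularization trick buys is an eigenfunction with no a priori lower bound on normalized iterates needed; what your route must supply in exchange is uniform positivity and boundedness of $\lambda_A^{-n}\mathcal{L}_A^n(1)$, a Harnack-type estimate which in this non-Markov setting is not automatic. Two points deserve correction. First, the role of the restriction on $(m,\beta)$ is not that $\#\sigma^{-1}(\{x\})$ becomes ``constant off a negligible boundary set'' --- it need not be constant at all; the restriction guarantees $\#\sigma^{-1}(\{x\}) \geq 2$ for \emph{every} $x$ (so the operator is well defined and the shift surjective), while local constancy of the preimage map is a separate lemma proved for all admissible parameters. Second, your cone-contraction step requires an iterate of the operator to map the cone into one of finite Hilbert-metric diameter, which for subshifts is a mixing/primitivity property rather than a consequence of transitivity alone; so this step, like the corresponding step the paper delegates to Parry--Pollicott, still needs a genuine verification for the symmetric $\beta$-shift and is where the real gap between an outline and a proof lies in both treatments.
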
 

Theorem \ref{theorem1} assures that Ruelle's Perron-Frobenius Theorem can be proved for some parameters $m$ and $\beta$ of the symmetric $\beta$-shift, which implies that given a H\"older continuous function $A$,  the value $\lambda_A$, the eigenfunction $\psi_A$ and te eigen-probability  $\rho_A$ are well defined.  The determination of these parameters is the main result of this paper. The proofs of the  other results we presented here are in some way analogous to the proof of other known results (but there are some technical differences).

Remember that  $\mathcal{M}_{\sigma}(X_{m, \beta})$ denotes the set of invariant probabilities for the shift acting on $ X_{m, \beta}.$

We define a suitable definition of entropy on section \ref{pre} and we show a variational principle of pressure.
In Proposition \ref{variational-principle} we will show that the probability $\psi_A\, \rho_A$
maximizes pressure.

\medskip

Hereafter, we will use the notation $m(A) = \sup_{\mu \in \mathcal{M}_{\sigma}(X_{m, \beta})} \left\{\int_{X_{m, \beta}} A d\mu \right\}$,  and we will denote by $\mathcal{M}_{\max}(A)$ the set of {\bf $A$-maximizing probability measures}, that is, the set of $\sigma$-invariant probability measures that attain $m(A)$. It is easy to check that $\mathcal{M}_{\max}(A)$ is a compact non-empty set.

Theorem \ref{theorem1} and the variational principle described in the  Proposition \ref{variational-principle} will ensure the existence of a family of equilibrium states $(\mu_{t\,A})_{t>0}$ depending of the real parameter $t$ (and a suitable expression for the family of entropies $(h(\mu_{t\,A}))_{t>0}$). 

Moreover, it follows from the compactness of set of shift-invariant probabilities on $X_{m, \beta}$ that the family $(\mu_{t\,A})_{t>0}$ has accumulation points, when $t \to\infty$. 

These accumulation points are sometimes called ground states. The parameter $t$ is usually identified with the inverse of the temperature for  the system of particles on the lattice under the action of the potential $A$. The limit of $\mu_{t\,A}$, when $t\to \infty$, is called the {\bf limit at zero temperature}.   It is known that the ground state probabilities are maximizing probabilities for the potential $A$ (see \cite{BLL}, \cite{MR3377291} or \cite{Lele}).

One interesting question is what can be said about the entropy of ground states  in terms of the entropies of equilibrium states, when temperature goes to zero, that is, the values $(h(\mu_{t\,A}))_{t>0}$, when $t \to \infty$ . 

One of the first works in this direction was due to Contreras et al. in \cite{MR1855838}. They show some properties of the limit of this family of entropies at zero temperature for potentials of class $\mathcal{C}^{1 + \alpha}$ defined on $S^1$. In the non-compact case, Morris proved in \cite{MR2295238} the existence of the zero temperature limit (see also \cite{MR2151222}). All these results were extended recently in \cite{MR3864383} by Freire and Vargas beyond the finitely primitive case. 

Although these type of problems have been widely studied in finite type subshifts in both, compact the non-compact setting, they have not been studied in a non-Markovian setting yet. Our second result guarantees the existence of the zero temperature limit for entropies in the   symmetric $\beta$-shifts model. 
The statement of the result is as follows:    

\begin{theorem} 
Consider $m > 2$ and $\beta \in [\frac{m}{2} + 2, m + 1]$, such that,  $X_{m, \beta}$ satisfies the transitivity condition. Let $A : X_{m, \beta} \to  \mathbb{R}$ be a H\"older continuous potential. Then, the family $(h(\mu_{tA}))_{t > 0}$ is continuous at infinity, and 
\[
\lim_{t \to \infty} h(\mu_{tA}) = \max_{\mu \in \mathcal{M}_{max}(A)} h(\mu) \,.
\]
\label{theorem3}
\end{theorem}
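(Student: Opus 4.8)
The plan is to sandwich the family $(h(\mu_{tA}))_{t>0}$ between the affine quantity $P(tA)-t\,m(A)$ and an upper-semicontinuous limit of entropies along convergent subsequences, and to show that both bounds collapse onto $\max_{\mu\in\mathcal{M}_{\max}(A)}h(\mu)$ as $t\to\infty$. Throughout I would invoke the variational principle of Proposition \ref{variational-principle}, which for each $t>0$ furnishes an equilibrium state $\mu_{tA}$ with $P(tA)=h(\mu_{tA})+t\int_{X_{m,\beta}}A\,d\mu_{tA}$, together with the fact that the alphabet $\mathcal{A}_m$ is finite, so that $h(\mu)\le\log(m+1)$ uniformly over all $\mu\in\mathcal{M}_\sigma(X_{m,\beta})$.

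First I would establish the lower bound. For any $\nu\in\mathcal{M}_{\max}(A)$ one has $\int A\,d\nu=m(A)$, so the variational principle gives $P(tA)\ge h(\nu)+t\,m(A)$. Since $\int A\,d\mu_{tA}\le m(A)$, I would then write
$$h(\mu_{tA})=P(tA)-t\int_{X_{m,\beta}} A\,d\mu_{tA}\ge P(tA)-t\,m(A)\ge h(\nu).$$
Taking the supremum over $\nu\in\mathcal{M}_{\max}(A)$ yields $h(\mu_{tA})\ge\max_{\mu\in\mathcal{M}_{\max}(A)}h(\mu)$ for every $t>0$, and hence $\liminf_{t\to\infty}h(\mu_{tA})\ge\max_{\mu\in\mathcal{M}_{\max}(A)}h(\mu)$.

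Next I would show that every weak-$*$ accumulation point of the family is a maximizing measure. Rearranging the inequality $h(\mu_{tA})+t\int A\,d\mu_{tA}\ge h(\nu)+t\,m(A)$ gives $t\big(m(A)-\int A\,d\mu_{tA}\big)\le h(\mu_{tA})-h(\nu)\le\log(m+1)$, so that $m(A)-\int A\,d\mu_{tA}\le \log(m+1)/t\to 0$, i.e. $\int A\,d\mu_{tA}\to m(A)$. Choosing a sequence $t_n\to\infty$ realizing $\limsup_t h(\mu_{tA})$, weak-$*$ compactness of $\mathcal{M}_\sigma(X_{m,\beta})$ lets me pass to a subsequence with $\mu_{t_nA}\to\mu_\infty$; continuity of $A$ then forces $\int A\,d\mu_\infty=m(A)$, so $\mu_\infty\in\mathcal{M}_{\max}(A)$.

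Finally, for the upper bound I would use that $X_{m,\beta}$ is a compact subshift over a finite alphabet, hence $\sigma$ is expansive and the entropy map $\mu\mapsto h(\mu)$ is upper semicontinuous for the weak-$*$ topology. Applying this along the subsequence above,
$$\limsup_{t\to\infty}h(\mu_{tA})=\lim_{n\to\infty} h(\mu_{t_nA})\le h(\mu_\infty)\le\max_{\mu\in\mathcal{M}_{\max}(A)}h(\mu),$$
the last inequality holding because $\mu_\infty$ is maximizing. Combining this with the lower bound shows the limit exists and equals $\max_{\mu\in\mathcal{M}_{\max}(A)}h(\mu)$, which is exactly continuity at infinity. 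The main obstacle I anticipate is the upper semicontinuity of entropy, together with checking that the ``suitable'' entropy defined in Section \ref{pre} agrees with the classical Kolmogorov--Sinai entropy; once that compatibility is secured, expansiveness of the subshift delivers upper semicontinuity and the remaining argument is the standard sandwich. A secondary point to verify is that the equilibrium states $\mu_{tA}$ supplied by Theorem \ref{theorem1} genuinely saturate the variational principle for every $t>0$, since the existence part of Theorem \ref{theorem1} is only guaranteed in the restricted range $m>2$, $\beta\in[\frac{m}{2}+2,m+1]$ assumed here.
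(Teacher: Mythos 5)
Your proposal is correct and follows essentially the same route as the paper: a sandwich argument in which the variational principle gives $h(\mu_{tA})\ge P(tA)-t\,m(A)\ge h(\nu)$ for every $\nu\in\mathcal{M}_{\max}(A)$, while the upper bound comes from showing weak-$*$ accumulation points of $(\mu_{tA})_{t>0}$ are $A$-maximizing and invoking upper semicontinuity of the entropy map. The only cosmetic differences are that you prove $\int A\,d\mu_{tA}\to m(A)$ quantitatively (rate $\log(m+1)/t$) where the paper passes to subsequential limits, and you justify upper semicontinuity via expansiveness where the paper uses the fact that the cylinders $[i]$ are clopen; note also that upper semicontinuity follows directly from the paper's definition (\ref{entropy}) of $h$ as an infimum of weak-$*$ continuous functionals, which bypasses the compatibility issue you flag.
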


The paper is organized as follows.

In section \ref{preliminaries-section} we present some preliminaries and  we introduce the Ruelle operator on symmetric $\beta$-shifts. We prove that it is well defined and preserves the set of H\"older continuous functions. At the end of the section we introduce some more  notation that will be used through the paper. 

In the Appendix \ref{RPF-theorem-section} appears the proof of the Theorem \ref{theorem1} (which is similar to the one in \cite{MR1085356}).
 
 In section \ref{pre} we  present a suitable definition of entropy.  We also  consider a variational principle of pressure. We use the results  above to show that the Gibbs state found in the Ruelle-Perron-Frobenius Theorem is an equilibrium state.
 
  Finally, in section \ref{zero-temperature-limit-section} we present the proof of Theorem \ref{theorem3}.

\section{The Ruelle operator - existence of eigenfunctions and eigenprobabilities}
\label{preliminaries-section}

In this section we present the definition of the Ruelle operator and we show that it is well defined for certain values of the parameters $m$ and $\beta$. 

By the characterization of symmetric $\beta$-shifts that appears in (\ref{symmetric-beta-shift}) we get that for any pair of numbers $m \in \mathbb{N}$ and $\beta \in (1, m + 1]$, it is true that $X_{m, \beta} = X_{\mathcal{F}_{m, \beta}}$, with $\mathcal{F}_{m, \beta}$ the collection of forbidden words of the shift $X_{m, \beta}$ (See \cite{MR1369092}), given by 
\[
\mathcal{F}_{m, \beta} = \bigcup_{n \in \mathbb{N}} \bigl(\mathcal{F}_{m, \beta}(n) \cup \overline{\mathcal{F}_{m, \beta}}(n)\bigr) \,,
\]
with $\mathcal{F}_{m, \beta}(n) = \{\omega = \omega(1) \ldots \omega(n) : x^{m, \beta}(1) \ldots x^{m, \beta}(n) \prec \omega(1) \ldots \omega(n)\}$, and $\overline{\mathcal{F}_{m, \beta}}(n) = \{\omega = \omega(1) \ldots \omega(n) : \omega(1) \ldots \omega(n) \prec \overline{x^{m, \beta}(1) \ldots x^{m, \beta}(n)}\}$.

From the characterization of symmetric $\beta$-shifts in (\ref{symmetric-beta-shift}), we can define the cylinder associated to the word $\omega = \omega(1) \ldots \omega(l)$, as the set 
\[
[\omega] = \{x \in X_{m, \beta} : x(1) = \omega(1), \ldots, x(l) = \omega(l)\} \,.
\]

Observe that $[\omega] \neq \emptyset$, if and only if, $\overline{x^{m, \beta}(i)} \preceq \omega(i + k) \preceq x^{m, \beta}(i)$, for all $i \in \{1, \ldots, l\}$ and each $1 \leq k \leq l - i$. Moreover, the topology generated by cylinders coincides with the product topology on the set $X_{m, \beta}$ and $\mathcal{P} = \{[0], \ldots, [m]\}$ is a generating partition of the Borel sigma algebra. 

It is easy to check that $X_{m, \beta}$ is a completely disconnected set. Indeed, if $U$ is a non-empty connected open set satisfying $U \subset X_{m, \beta}$ and $U \neq \{x\}$, we can choose $x \in U$ and $\epsilon > 0$, such that, $B(x, \epsilon) \subset U$. Therefore, for each $y \in B(x, \epsilon)$, it is satisfied $\overline{x^{m, \beta}} \preceq \sigma^k y \preceq x^{m, \beta}$, for all $k \in \mathbb{N} \cup \{0\}$, in other words, $\sigma^k(B(x, \epsilon)) \subset X_{m, \beta}$, for all $k \in \mathbb{N} \cup \{0\}$, which is a contradiction. This is so, because for each $y_0 \in B(x, \epsilon)) \setminus \{x\}$, there exists $k_0 \in \mathbb{N}$, such that, $d(\sigma^{k_0}(x), \sigma^{k_0}(y_0)) = 2^{k_0}\epsilon > d(\overline{x^{m, \beta}}, x^{m, \beta})$.

We will be interested only in the case of symmetric $\beta$-shifts $X_{m, \beta}$, with $m \in \mathbb{N}$ and values of $\beta \in (\mathcal{G}(m), m + 1] \cap \overline{\mathcal{U}_m}$.

Hereafter, we will denote by $\mathcal{H}_{\alpha}(X_{m, \beta})$ the set of {\bf H\"older continuous functions} from $X_{m, \beta}$ into $\mathbb{R}$ with coefficient $\alpha$, i.e. the set of functions $\psi : X_{m, \beta} \to \mathbb{R}$ satisfying for some $K \geq 0$ and all $x, y \in X_{m, \beta}$ the following inequality
\begin{equation}
|\psi(x) - \psi(y)| \leq Kd(x, y)^{\alpha} \,.
\label{Holder}
\end{equation} 

Besides that, for any $\psi \in \mathcal{H}_{\alpha}(X_{m, \beta})$ we will use the notation $\mathrm{Hol}_{\psi}$ for the H\"older constant of $\psi$, which is defined by $\mathrm{Hol}_{\psi} = \sup_{x \neq y} \frac{|\psi(x) - \psi(y)|}{d(x, y)^{\alpha}}$. Thus, given $\psi \in \mathcal{H}_{\alpha}(X_{m, \beta})$, its norm is defined as $\|\psi\|_{\alpha} = \|\psi\|_{\infty} + \mathrm{Hol}_{\psi}$. It is simple to check that $(\mathcal{H}_{\alpha}(X_{m, \beta}), \|\cdot\|)$ is a Banach space. 
 
We will denote by $\mathcal{C}(X_{m, \beta})$ the set of continuous functions from $X_{m, \beta}$ into $\mathbb{R}$. Taking a potential $A \in \mathcal{C}(X_{m, \beta})$, we define the {\bf Ruelle operator associated to $A$} as the function that assigns to each continuous function $\varphi$, the function
\begin{equation}
\mathcal{L}_A(\varphi)(x) := \sum_{\sigma(y) = x} e^{A(y)}\varphi(y) \,.
\label{Ruelle-operator}
\end{equation}

In the following Lemma we will provide values for $m$ and $\beta$, such that, $\mathcal{L}_{A}(\varphi)$ is well defined on the set $X_{m, \beta}$. 

\begin{lemma} \label{bigd}
Consider $m > 2$ and $\beta \in \left[\frac{m}{2} + 2, m + 1\right]$. Then, any $x \in X_{m, \beta}$ is such  that $\sigma^{-1}(\{x\}) \neq \emptyset$. Moreover, in this case $\#(\sigma^{-1}(\{x\})) \geq 2$.
\end{lemma}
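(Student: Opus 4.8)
The plan is to reduce the statement to a counting problem about admissible one-step extensions to the left, and then to read off the count from the first digit of $x^{m,\beta}$.

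First I would observe that every preimage of $x = (x(n))_{n}$ under $\sigma$ has the form $y = a\,x := (a, x(1), x(2), \dots)$ for some symbol $a \in \mathcal{A}_m$, and conversely such a $y$ lies in $\sigma^{-1}(\{x\})$ exactly when $y \in X_{m,\beta}$. Since $x \in X_{m,\beta}$ already, the admissibility conditions $\overline{x^{m,\beta}} \preceq \sigma^k y \preceq x^{m,\beta}$ for $k \geq 1$ reduce to the ones for $x$ and hold automatically; only the condition for $k = 0$, namely $\overline{x^{m,\beta}} \preceq a\,x \preceq x^{m,\beta}$, must be checked. Thus $\#\sigma^{-1}(\{x\})$ equals the number of symbols $a$ satisfying this pair of lexicographic inequalities, and everything is governed by comparisons in the first coordinate.

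Write $d := x^{m,\beta}(1)$ for the first digit of the quasi-greedy expansion of $1$; a standard fact about quasi-greedy expansions gives $d = \lceil \beta \rceil - 1$, and I would record this, since it is the only input about $x^{m,\beta}$ that the argument needs. If $m - d < a < d$, then $a\,x$ begins with a digit strictly between $\overline{x^{m,\beta}}(1) = m-d$ and $x^{m,\beta}(1) = d$, so both inequalities are strict and hold irrespective of $x$; hence all such $a$ give genuine preimages. There are $2d - m - 1$ of them. Using $\beta \geq \frac m2 + 2$ one gets $d \geq \beta - 1 \geq \frac m2 + 1$, whence $2d - m - 1 \geq 1$; this already proves $\sigma^{-1}(\{x\}) \neq \emptyset$. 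For the lower bound $\#\sigma^{-1}(\{x\}) \geq 2$ I would exploit the parity of $2d - m - 1$. When $m$ is odd this integer is even and positive, hence at least $2$, and we are done. When $m$ is even it is odd, so it equals $1$ only in the borderline case $d = \frac m2 + 1$; given the range $\beta \in [\frac m2 + 2, m+1]$ together with $d = \lceil\beta\rceil - 1$, this forces $\beta = \frac m2 + 2 \in \mathbb{Z}$, for which $x^{m,\beta} = (\frac m2 + 1)^\infty$ and $\overline{x^{m,\beta}} = (\frac m2 - 1)^\infty$. In this single case I would check the two boundary symbols directly: since every $x \in X_{m,\beta}$ satisfies $x \preceq (\frac m2+1)^\infty$ and $x \succeq (\frac m2 - 1)^\infty$, prepending $a = \frac m2 + 1$ or $a = \frac m2 - 1$ keeps $y$ admissible, so in fact all of $\frac m2 - 1, \frac m2, \frac m2 + 1$ work and there are three preimages.

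The main obstacle, and where the hypothesis $\beta \geq \frac m2 + 2$ is really used, is controlling the boundary symbols $a = d$ and $a = m-d$: unlike the interior symbols these are not automatically admissible (for instance $a = d$ requires $x \preceq \sigma x^{m,\beta}$, which can fail), so the bound of at least two preimages must be extracted from the interior symbols alone. The parity observation, together with the explicit treatment of the integer base $\beta = \frac m2 + 2$, is precisely what lets one avoid relying on the boundary. Securing the value $d = \lceil\beta\rceil - 1$ of the leading digit of the quasi-greedy expansion is the other small point to pin down.
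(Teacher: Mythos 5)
Your proof is correct, and it takes a genuinely different route from the paper's. Both arguments start from the same reduction: $\#\sigma^{-1}(\{x\})$ equals the number of digits $a$ with $ax \in X_{m,\beta}$, and since $x \in X_{m,\beta}$ only the $k=0$ condition in (\ref{symmetric-beta-shift}) needs checking. From there the paper works analytically with the values $\sum_{k} x(k)\beta^{-k}$: it shows that every integer in the open interval $\left(\frac{\beta}{\beta-1}(m-\beta+1),\, \beta-1\right)$ is an admissible leading digit, then counts lattice points at $\beta = \frac m2+2$ and invokes monotonicity of this interval in $\beta$. You work lexicographically instead: every digit strictly between $m-d$ and $d$, where $d = x^{m,\beta}(1) = \lceil\beta\rceil-1$, is admissible by a first-coordinate comparison, giving $2d-m-1 \geq 1$ digits, and you finish by parity plus a direct computation in the single borderline case ($m$ even, $\beta = \frac m2+2$), where $X_{m,\beta}$ is the full shift on $\{\frac m2-1, \frac m2, \frac m2+1\}$. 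This care at the boundary buys you something real: the paper's counting claim, namely that $\left(\frac{m^2/4+m/2-2}{m/2+1}, \frac m2+1\right) = \left(\frac m2 - \frac{4}{m+2}, \frac m2+1\right)$ contains at least two integers for every $m>2$, is false as written -- this open interval contains exactly one integer for every even $m$ (e.g. $m=4$, $\beta=4$ gives $(4/3,3)\cap\mathbb{N}=\{2\}$) and for every odd $m\geq 7$; only $m=3,5$ give two. So the paper's proof of the bound $\#\sigma^{-1}(\{x\})\geq 2$ has a gap exactly at the configurations your parity argument and full-shift check handle; it could be repaired by admitting the boundary digit $a=\beta-1$ (closing the interval), which is precisely the lexicographic content of your borderline computation. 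The one ingredient you should write out rather than cite is the ``standard fact'' $d=\lceil\beta\rceil-1$: the first digit of an infinite expansion of $1$ must satisfy $a<\beta$, and conversely the largest integer $a<\beta$ (which is at most $m$ because $\beta\leq m+1$) leaves a remainder $1-a/\beta$ in $\left(0, \frac{m}{\beta(\beta-1)}\right]$, which admits an infinite expansion; this is a one-line argument and makes your proof self-contained.
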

\begin{proof}
We use the notation $ax$ for the sequence $(a, x(1), x(2), \ldots) \in \mathcal{A}_m^{\mathbb{N}}$. From the above, we define for each $x \in X_{m, \beta}$ the set $\mathcal{A}_m(x)$ as 
\begin{align}
\mathcal{A}_m(x) 
&:= \{a \in \mathcal{A}_m: ax \in X_{m, \beta}\} \nonumber \\
&= \{a \in \mathcal{A}_m: ax(1) \ldots x(n) \notin \mathcal{F}_{m, \beta}, \, \forall n \in \mathbb{N} \} \nonumber \,.
\end{align}

Moreover, each $a \in \mathcal{A}_m(x)$ satisfies the following conditions:
\begin{enumerate}
\item $\frac{m}{\beta - 1} - 1 < \frac{a}{\beta} < 1$;
\item $\frac{m}{\beta - 1} - 1 < \frac{a}{\beta} + \frac{1}{\beta}\sum_{k=1}^n x(k)\beta^{-k} < 1$ for each $n \in \mathbb{N}$.
\end{enumerate}

Note that by the above definition, it follows immediately that 
\[
\#(\sigma^{-1}(\{x\})) = \#(\mathcal{A}_m(x)) \,.
\]

Besides that, each point $x \in X_{m, \beta}$ satisfy the following inequalities 
\begin{equation}
\frac{m}{\beta - 1} - 1 < \sum_{k = 1}^n x(k)\beta^{-k} < 1 , \, \forall n \in \mathbb{N} \,.
\label{inequality-shift}
\end{equation}

Now, we want to demonstrate that under the hypothesis of this Lemma, it is satisfied 
\begin{equation}
\left(\frac{\beta}{\beta - 1}(m - \beta + 1), \beta - 1\right) \cap \mathbb{N} \subset \mathcal{A}_m(x) \,. 
\label{interval}
\end{equation}

Indeed, taking 
\[
a \in \left(\frac{\beta}{\beta - 1}(m - \beta + 1), \beta - 1\right) \cap \mathbb{N} \,,
\]

by the right side in (\ref{inequality-shift}), we obtain that for all $n \in \mathbb{N}$ it is satisfied
\[
\frac{a}{\beta} + \frac{1}{\beta}\sum_{k = 1}^n x(k)\beta^{-k} < \frac{a}{\beta} + \frac{1}{\beta} < \frac{\beta - 1}{\beta} + \frac{1}{\beta} = 1 \,
\]

On other hand, by the left side in (\ref{inequality-shift}), for all $n \in \mathbb{N}$, we have 
\begin{align}
\frac{a}{\beta} + \frac{1}{\beta}\sum_{k = 1}^n x(k)\beta^{-k} 
&> \frac{m - \beta + 1}{\beta - 1} + \frac{m}{\beta(\beta - 1)} - \frac{1}{\beta} \nonumber \\
&> \frac{m - \beta + 1}{\beta} + \frac{m}{\beta(\beta - 1)} - \frac{1}{\beta} \nonumber \\ 
&= \frac{(m - \beta + 1)(\beta - 1) + m - (\beta - 1)}{\beta(\beta - 1)} \nonumber \\
&= \frac{m\beta - \beta^2 + \beta - m + \beta -1 + m - \beta + 1}{\beta(\beta - 1)} \nonumber \\
&= \frac{m - \beta + 1}{\beta - 1} \nonumber \\
&=\frac{m}{\beta - 1} - 1 \nonumber \,.
\end{align}

Therefore, 
\[
\frac{m}{\beta - 1} - 1 < \frac{a}{\beta} + \frac{1}{\beta}\sum_{k=1}^n x(k)\beta^{-k} < 1, \, \forall n \in \mathbb{N} \,.
\]

Besides that, as $a < \beta - 1$, we get
\[
\frac{a}{\beta} < \frac{\beta - 1}{\beta} < 1 \,,
\]

and using the fact that $a > \frac{\beta}{\beta - 1}(m - \beta + 1)$, it follows that
\[
\frac{a}{\beta} > \frac{m - \beta + 1}{\beta - 1} = \frac{m}{\beta - 1} - 1 \,.  
\]

That is, 
\[
\frac{m}{\beta - 1} - 1 < \frac{a}{\beta} < 1 \,.
\]

By the above and (\ref{preimages}), it follows that $a \in \mathcal{A}_m(x)$, which proves (\ref{interval}). 

Moreover, taking $\beta = \frac{m}{2} + 2$, it follows that
\[
\left(\frac{\beta}{\beta - 1}(m - \beta + 1), \beta - 1\right) = \left(\frac{m^2/4 + m/2 - 2}{m/2 + 1}, \frac{m}{2} + 1\right) \,,
\]

and for all $m > 2$ it is satisfied
\[
\#\bigl(\left(\frac{m^2/4 + m/2 - 2}{m/2 + 1}, \frac{m}{2} + 1\right) \cap \mathbb{N} \bigr) \geq 2\,.
\]

In addition, we have $\left(\frac{m^2/4 + m/2 - 2}{m/2 + 1}, \frac{m}{2} + 1\right) \subseteq \left(\frac{\beta}{\beta - 1}(m - \beta + 1), \beta - 1\right)$, for all $\beta \in \left[\frac{m}{2} + 2, m + 1\right]$. 

Therefore, it follows that $\#(\sigma^{-1}(\{x\})) = \#(\mathcal{A}_m(x)) \geq 2$, for all $\beta \in \left[\frac{m}{2} + 2, m + 1\right]$
\end{proof}

In the next Lemma we will check that the Ruelle operator is a local homeomorphism. Moreover, we will show in the next Lemma that the Ruelle operator preserves the set of 
H\"older continuous potentials.

\begin{lemma}
Consider $x \in X_{m, \beta}$ such that $\mathcal{A}_m(x) \neq \emptyset$. Then, for any point $x' \in X_{m, \beta}$ which is close enough to $x$, we get $\mathcal{A}_m(x) = \mathcal{A}_m(x')$.
\end{lemma}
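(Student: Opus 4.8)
The plan is to show that $\mathcal{A}_m(x)$ depends on $x$ only through finitely many of its coordinates, which immediately forces local constancy. Abbreviate $w := x^{m,\beta}$ and $\overline{w} := \overline{x^{m,\beta}}$. First I would note that for every symbol $a$ and every $k \geq 1$ one has $\sigma^k(ax) = \sigma^{k-1}(x)$, so the admissibility constraints $\overline{w} \preceq \sigma^k(ax) \preceq w$ with $k \geq 1$ hold automatically because $x \in X_{m,\beta}$. Hence $ax \in X_{m,\beta}$ is equivalent to the single constraint imposed at $k = 0$, namely $\overline{w} \preceq ax \preceq w$; that is, $\mathcal{A}_m(x) = \{a \in \mathcal{A}_m : \overline{w} \preceq ax \preceq w\}$, so the only genuine conditions concern the leading symbol.

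Next I would analyse this constraint symbol by symbol. Comparing leading symbols, $ax \preceq w$ holds automatically when $a < w(1)$, fails when $a > w(1)$, and when $a = w(1)$ it is equivalent to $x \preceq \sigma(w)$ (since $w = w(1)\,\sigma(w)$); dually, $\overline{w} \preceq ax$ holds when $a > \overline{w}(1)$, fails when $a < \overline{w}(1)$, and when $a = \overline{w}(1)$ it is equivalent to $\sigma(\overline{w}) \preceq x$, where $\overline{w}(1) = m - w(1) \leq w(1)$ because $\overline{w} \preceq w$. Therefore every $a$ with $\overline{w}(1) < a < w(1)$ lies in $\mathcal{A}_m(x)$ regardless of $x$ (in agreement with the interior preimages produced by Lemma \ref{bigd}), and the entire $x$-dependence of $\mathcal{A}_m(x)$ is carried by the two conditions $x \preceq \sigma(w)$, which governs the symbol $a = w(1)$, and $\sigma(\overline{w}) \preceq x$, which governs $a = \overline{w}(1)$.

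It then remains to prove that each of these two comparisons is locally constant at $x$. A strict lexicographic inequality between $x$ and a fixed sequence is decided at the first coordinate where they disagree; so if $x \neq \sigma(w)$ and $x \neq \sigma(\overline{w})$, I would take $N$ larger than both of the corresponding first-disagreement coordinates. Then any $x' \in X_{m,\beta}$ with $d(x,x') < 2^{-N+1}$ satisfies $x'(i) = x(i)$ for all $i \leq N$, so $x'$ realises exactly the same two comparisons as $x$, and consequently $\mathcal{A}_m(x') = \mathcal{A}_m(x)$.

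The main obstacle is the degenerate case in which one of the comparisons is an equality, say $x = \sigma(w)$: then no finite prefix decides whether $x \preceq \sigma(w)$, and an $x'$ sharing an arbitrarily long prefix with $x$ could satisfy $x' \succ \sigma(w)$, removing $w(1)$ from $\mathcal{A}_m(x')$ and breaking local constancy. I would dispose of this using the standing hypotheses on $(m,\beta)$. When $w$ is a constant word the space $X_{m,\beta}$ is a full shift and $\mathcal{A}_m(\cdot)$ is globally constant, so the claim is trivial. Otherwise I would show that in the admissible range the sequence $\sigma(w)$ violates the lower admissibility bound, i.e. there is $j \geq 1$ with $\sigma^j(w) \prec \overline{w}$, so that $\sigma(w) \notin X_{m,\beta}$ and in particular $x \neq \sigma(w)$ for every $x \in X_{m,\beta}$; the reflection symmetry of $X_{m,\beta}$, combined with $\sigma(\overline{w}) = \overline{\sigma(w)}$, then settles the comparison with $\sigma(\overline{w})$ as well. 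Establishing this exclusion is precisely where the quantitative hypotheses inherited from Lemma \ref{bigd} and the transitivity condition are needed, and it is the step I expect to require the most care.
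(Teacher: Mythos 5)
Your reduction is correct as far as it goes, and it is in fact cleaner than the paper's own argument: the observations that only the $k=0$ constraint on $ax$ is at stake, that every symbol strictly between $\overline{x^{m,\beta}}(1)$ and $x^{m,\beta}(1)$ is admissible for every $x$, and that the entire $x$-dependence of $\mathcal{A}_m(x)$ sits in the two comparisons $x \preceq \sigma(x^{m,\beta})$ and $\sigma(\overline{x^{m,\beta}}) \preceq x$, are all sound, and your non-degenerate case reproduces in purely lexicographic terms what the paper does through the series $\sum_n x(n)\beta^{-n}$ (its Case 1). Your degenerate case, $x = \sigma(x^{m,\beta})$ or $x = \sigma(\overline{x^{m,\beta}})$, corresponds exactly to the paper's Cases 2 and 3.

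The gap is that your plan for the degenerate case cannot be carried out, because its key claim is false. Write $w = x^{m,\beta}$. You propose to show $\sigma(w) \notin X_{m,\beta}$, i.e.\ that $\sigma^j(w) \prec \overline{w}$ for some $j \geq 1$. This contradicts the standing hypotheses of the section: there one assumes $\beta \in \overline{\mathcal{U}_m}$ (or $\beta = \beta_T$), and by the de Vries--Komornik characterization of $\overline{\mathcal{U}_m}$ (see \cite{MR2508926}) membership in $\overline{\mathcal{U}_m}$ is \emph{equivalent} to $\overline{w} \preceq \sigma^k(w)$ for all $k \geq 1$; combined with the quasi-greedy inequalities $\sigma^k(w) \preceq w$ this yields $w \in X_{m,\beta}$, hence $\sigma(w) \in X_{m,\beta}$ by the shift invariance of (\ref{symmetric-beta-shift}) (for $\beta = \beta_T$ one checks $w \in X_{m,\beta_T}$ directly). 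So the degenerate point is always present, and indeed $w(1) \in \mathcal{A}_m(\sigma(w))$ because $w(1)\sigma(w) = w \in X_{m,\beta}$. What actually has to be proved is a one-sided isolation statement: no point of $X_{m,\beta}$ strictly above $\sigma(w)$ comes arbitrarily close to $\sigma(w)$, and dually at $\sigma(\overline{w})$. This is genuinely parameter-dependent, not a formality: for $m = 2k$ and $\beta = \beta_T$ --- a case satisfying the transitivity condition --- one has $w = (k+1)k^{\infty}$, $\sigma(w) = k^{\infty}$, and the points $x'_N = k^N(k+1)k^{\infty}$ lie in $X_{m,\beta_T}$, converge to $k^{\infty}$, yet satisfy $\mathcal{A}_m(x'_N) = \{k-1,k\} \neq \{k-1,k,k+1\} = \mathcal{A}_m(k^{\infty})$; thus the statement itself fails there, and any proof must invoke the stronger restrictions on $(m,\beta)$ (e.g.\ those of Lemma \ref{bigd}, whose range excludes $\beta_T$ since $\beta_T < \frac{m}{2}+2$) to rule out approximation from above. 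For comparison, the paper's own proof is also incomplete at exactly this point --- in its Cases 2 and 3 it silently restricts attention to $x' \preceq x$ (resp.\ $x \preceq x'$) without showing that nearby points of $X_{m,\beta}$ must lie on that side --- but that is an unproven step, whereas the step you propose, $\sigma(w) \notin X_{m,\beta}$, is provably impossible under the paper's hypotheses.
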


\begin{proof}
For a fixed $x \in X_{m, \beta}$ and $a \in \mathcal{A}_m(x)$, it is easy to verify that 
\[
\frac{\beta m}{\beta - 1} - \beta - a \leq \sum_{n \in \mathbb{N}}x(n)\beta^{-n} \leq \beta - a \,.
\]

The analysis of the above inequalities  can be decomposed in the analysis of the following cases:
 \medskip
 
\textit{Case 1:} 
\[
\frac{\beta m}{\beta - 1} - \beta - a < \sum_{n \in \mathbb{N}}x(n)\beta^{-n} < \beta - a \,.
\]

Take  $N \in \mathbb{N}$ large enough, such that,
\[
m \sum_{n > N} \beta^{-n} < \min \left\{\beta - a - \sum_{n \in \mathbb{N}}x(n)\beta^{-n}, \sum_{n \in \mathbb{N}}x(n)\beta^{-n} - \frac{\beta m}{\beta - 1} + \beta + a \right\} \,.
\] 

It follows that for any $x' \in X_{m, \beta}$ with $d(x, x') < 2^{-N}$, it is satisfied
\[
\frac{\beta m}{\beta - 1} - \beta - a < -m \sum_{n > N} \beta^{-n} + \sum_{n \in \mathbb{N}}x(n)\beta^{-n} \leq \sum_{n \in \mathbb{N}}x'(n)\beta^{-n} \,,
\]

and 
\[
\sum_{n \in \mathbb{N}}x'(n)\beta^{-n} \leq \sum_{n \in \mathbb{N}}x(n)\beta^{-n} + m \sum_{n > N} \beta^{-n} < \beta - a \,.
\]

The reasoning above implies $ax' \in X_{m, \beta}$, which is equivalent to $a \in \mathcal{A}_m(x')$.

\medskip

\textit{Case 2:}
\[
\sum_{n \in \mathbb{N}}x(n)\beta^{-n} = \beta - a \,.
\]

For this case we choose $N \in \mathbb{N}$ large enough, such that 
\[
m \sum_{n > N} \beta^{-n} < \sum_{n \in \mathbb{N}}x(n)\beta^{-n} - \frac{\beta m}{\beta - 1} + \beta + a \,.
\] 

Therefore, for any $x' \in X_{m, \beta}$ satisfying $x' \preceq x$ and $d(x, x') < 2^{-N}$, we have
\[
\frac{\beta m}{\beta - 1} - \beta - a < -m \sum_{n > N} \beta^{-n} + \sum_{n \in \mathbb{N}}x(n)\beta^{-n} \leq \sum_{n \in \mathbb{N}}x'(n)\beta^{-n} \,,
\]

and 
\[
\sum_{n \in \mathbb{N}}x'(n)\beta^{-n} \leq \sum_{n \in \mathbb{N}}x(n)\beta^{-n} = \beta - a \,.
\]

By the above, $ax' \in X_{m, \beta}$. Then, we can conclude that $a \in \mathcal{A}_m(x')$.

\medskip

\textit{Case 3:} 
\[
\frac{\beta m}{\beta - 1} - \beta - a = \sum_{n \in \mathbb{N}}x(n)\beta^{-n} \,.
\]

In this case, we choose $N \in \mathbb{N}$ large enough, such that,
\[
m \sum_{n > N} \beta^{-n} < \beta - a - \sum_{n \in \mathbb{N}}x(n)\beta^{-n} \,.
\] 

Thus, for any $x' \in X_{m, \beta}$ satisfying $x \preceq x'$ and $d(x, x') < 2^{-N}$, we get that 
\[
\frac{\beta m}{\beta - 1} - \beta - a = \sum_{n \in \mathbb{N}}x(n)\beta^{-n} \leq \sum_{n \in \mathbb{N}}x'(n)\beta^{-n} \,,
\]

and 
\[
\sum_{n \in \mathbb{N}}x'(n)\beta^{-n} \leq \sum_{n \in \mathbb{N}}x(n)\beta^{-n} + m \sum_{n > N} \beta^{-n} < \beta - a \,.
\]

By the above, $ax' \in X_{m, \beta}$. That is, $a \in \mathcal{A}_m(x')$.

Note that in all the cases studied above  $\mathcal{A}_m(x) = \mathcal{A}_m(x')$, when $x$ and $x'$ are close enough. The foregoing implies that $\mathcal{L}_A(\varphi)$ is a local homeomorphism when $A, \varphi \in \mathcal{C}(X_{m, \beta})$.
\end{proof}

\medskip

There are parameters $m$ and $\beta$  (for instance, when $m > 2$ and $\beta \in [\frac{m}{2} + 2, m + 1]$), such that, the shift is transitive and also satisfy the conditions of Lemma \ref{bigd}. We assume on the proof of the Ruelle Theorem these conditions.

\medskip

The main conclusion is: if $A, \varphi \in \mathcal{H}_{\alpha}(X_{m, \beta})$ and $x, x' \in X_{m, \beta}$ are close enough, we get that $\mathcal{A}_m(x) = \mathcal{A}_m(x')$. It follows that
\begin{align}
|\mathcal{L}_{A}(\varphi)(x) - \mathcal{L}_{A}(\varphi)(x')|
&\leq \sum_{a \in \mathcal{A}_m(x)}\left|e^{A(ax)}\varphi(ax) - e^{A(ax')}\varphi(ax')\right| \nonumber \\
&\leq \frac{(m + 1)}{2^{\alpha}}\bigl(e^{\|A\|_{\infty}}\mathrm{Hol}_{\varphi} + \|\varphi\|_{\infty}\mathrm{Hol}_{e^{A}}\bigr)d(x, x')^{\alpha} \nonumber \,.
\end{align}

By the above, $\mathcal{L}_{A}(\varphi)$ is locally H\"older continuous. Thus, by compactness of $X_{m, \beta}$, it follows that $\mathcal{L}_{A}(\varphi) \in \mathcal{H}_{\alpha}(X_{m, \beta})$.

It follows that for each $n \in \mathbb{N}$, the $n$-th iterate of the Ruelle operator, which is defined by 
\[
\mathcal{L}^n_A(\varphi)(x) = \sum_{\sigma^n(y) = x} e^{S_n A(y)}\varphi(y) \,
\] 
satisfies the same properties mentioned above, where $S_n A(y) = \sum_{j=0}^{n-1} A(\sigma^j(y))$. The above, it is an immediate consequence of the fact $\mathcal{L}^n_A(\varphi) = \mathcal{L}_A\left(\mathcal{L}^{n-1}_A(\varphi)\right)$

Given two Banach spaces $X$ and $Y$ we denote by $l(X, Y)$ the Banach space of linear continuous operators from $X$ into $Y$. We will use the notation $l(X)$ for the Banach space of linear continuous operators from $X$ into itself.

Note that $\|\mathcal{L}_A(\varphi)\|_{\alpha} < K < \infty$, for any $\varphi \in \mathcal{H}_{\alpha}(X_{m, \beta})$, with $\|\varphi\|_{\alpha} \leq 1$. Therefore, $\|\mathcal{L}_A\| < \infty$, in other words $\mathcal{L}_A \in l(\mathcal{H}_{\alpha}(X_{m, \beta}))$. 

Using the properties of the dual space of a Banach space, we can define the dual of the Ruelle operator $\mathcal{L}^*_A$ on the set of Radon measures, as the operator that satisfies for any $\varphi \in \mathcal{C}(X_{m, \beta})$ the following equation 
\[
\int_{X_{m, \beta}}\varphi d\left(\mathcal{L}^*_A(\nu)\right) = \int_{X_{m, \beta}}\mathcal{L}_A(\varphi) d\nu \,.
\]

From the above equation, for each $n \in \mathbb{N}$ we can express the $n$-th iterate of the dual Ruelle operator by
\[
\int_{X_{m, \beta}}\varphi \,\,d\left(\mathcal{L}^{*,n}_A(\nu)\right) = \int_{X_{m, \beta}}\mathcal{L}^n_A(\varphi) d\nu \,.
\]

From now on, we will denote by $\mathcal{R}(X_{m, \beta})$ the set of Radon measures on the symmetric $\beta$-shift $X_{m, \beta}$ and we will use the notation $\mathcal{M}(X_{m, \beta})$ for the set of Radon probability measures on $X_{m, \beta}$. Besides that, we will denote by $\mathcal{M}_{\sigma}(X_{m, \beta})$ the set of $\sigma$-invariant Radon probability measures defined on $X_{m, \beta}$. Observe that by Banach-Alaoglu's Theorem both of the sets, $\mathcal{M}(X_{m, \beta})$ and $\mathcal{M}_{\sigma}(X_{m, \beta})$, are compact subsets of $\mathcal{R}(X_{m, \beta})$.

Theorem \ref{theorem1}  claims that if $A : X_{m, \beta} \to \mathbb{R}$ is a  H\"older continuous potential, then, there exists $\lambda_A > 0$, 
and

1) a  unique Radon probability measure $\rho_A$, defined on the Borelian sets of $X_{m, \beta}$, such that, $\mathcal{L}^*_A(\rho_A) = \lambda_A\rho_A$. 

2) a  function $\psi_A : X_{m, \beta} \to \mathbb{R}$ which is a  strictly positive H\"older continuous function and such that $\mathcal{L}_A(\psi_A) = \lambda_A\psi_A$. Assuming that $\int \psi_A d \rho_a =1$, the normalized eigenfunction $\psi_A$ is uniquely determined (because the probability $\rho_A$ was {\bf uniquely} determined).

3) The probability measure $\mu_A = \psi_A d\rho_A$, where we take the uniquely determined function $\psi_A$ from item 2), is invariant for the shift and also uniquely determined.

Hereafter, we will assume that  $\rho_A$, $\psi_A $ and $\mu_A$ denote the uniquely determined elements describe by 1) , 2) and 3) (and in this order of determination).

In the proof of Theorem \ref{theorem1} we use a similar procedure as the one that appears in \cite{MR1085356} for the case of compact  subshifts. One can  show that the same reasoning can be applied on our setting (we have to check all details for a specific proof that works in our setting). For a question of completeness we will present the sketch of the proof in the appendix \ref{RPF-theorem-section}.

\section{The variational principle of pressure} \label{pre}

In this section we are going to define the entropy associated to a $\sigma$-invariant probability measure. Furthermore, we are going to show that this definition satisfies a variational principle. Indeed, given $\mu \in \mathcal{M}_{\sigma}(X_{m, \beta})$ we define the entropy of $\mu$ (see \cite{MR3377291}) as 
\begin{equation}
h(\mu) = \inf_{u \in \mathcal{C}^+(X_{m, \beta})}\left\{\int_{X_{m, \beta}}\log\left(\frac{\mathcal{L}_0(u)}{u}\right)d\mu\right\} \,.
\label{entropy}
\end{equation}

We assume that the parameters $m$ and $\beta$ satisfy the conditions required in last section. That is, $m > 2$ and $\beta \in [m/2 + 2, m + 1]$.

Given the potential  $A \in \mathcal{H}_{\alpha}(X_{m, \beta})$, the normalization of $A$ is defined as 
\begin{equation}
\overline{A} := A + \log(\psi_A) - \log(\psi_A \circ \sigma) - \log(\lambda_A) \,.
\label{normalization}
\end{equation}

The above definition will be used in the proof of the variational principle that appears in Proposition \ref{variational-principle} and also in the proof of Theorem \ref{theorem1} that appears in section \ref{RPF-theorem-section}.

If $\mu$ is a fixed point of the Ruelle operator associated to some H\"older continuous potential (see \cite{MR3377291}) the following Lemma show us that entropy (given by the above definition)
can be expressed in an integral form.

\begin{lemma}
Consider $A \in \mathcal{H}_{\alpha}(X_{m, \beta})$ and $\mu_A$ the unique fixed point of the dual Ruelle operator $\mathcal{L}^*_{\overline{A}}$. Then, 
\[
h(\mu_A) = -\int_{X_{m, \beta}} \overline{A}d\mu_A \,. 
\]
\label{variational-principle-entropy}
\end{lemma}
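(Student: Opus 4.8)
The plan is to prove the two inequalities $h(\mu_A) \le -\int_{X_{m,\beta}}\overline{A}\,d\mu_A$ and $h(\mu_A) \ge -\int_{X_{m,\beta}}\overline{A}\,d\mu_A$ separately: the first by testing the infimum in (\ref{entropy}) against one explicit function, the second by a Gibbs-type inequality valid for every admissible $u$. First I would record the two facts about $\mu_A$ and $\overline{A}$ that the argument rests on. From the definition (\ref{normalization}) together with $\mathcal{L}_A(\psi_A)=\lambda_A\psi_A$ one checks directly, using $\psi_A(\sigma(y))=\psi_A(x)$ for $\sigma(y)=x$, that $\mathcal{L}_{\overline{A}}(1)=1$; equivalently, setting $g:=e^{\overline{A}}$, one has $\mathcal{L}_0(g)=1$. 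Moreover the hypothesis $\mathcal{L}^*_{\overline{A}}(\mu_A)=\mu_A$ means, by definition of the dual, that $\int \mathcal{L}_{\overline{A}}(\varphi)\,d\mu_A=\int \varphi\,d\mu_A$ for every continuous $\varphi$ (and in particular $\mu_A\in\mathcal{M}_\sigma(X_{m,\beta})$, since $\mathcal{L}_{\overline{A}}(\varphi\circ\sigma)=\varphi\cdot\mathcal{L}_{\overline{A}}(1)=\varphi$).

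For the upper bound I would take $u=g=e^{\overline{A}}$. This lies in $\mathcal{C}^+(X_{m,\beta})$ because $\overline{A}$ is continuous ($\psi_A$ being strictly positive and H\"older, and $\lambda_A>0$). Then $\mathcal{L}_0(u)/u=\mathcal{L}_0(g)/g=1/g=e^{-\overline{A}}$, so $\log(\mathcal{L}_0(u)/u)=-\overline{A}$, and since $h(\mu_A)$ is an infimum it is bounded above by $\int_{X_{m,\beta}}\log(\mathcal{L}_0(g)/g)\,d\mu_A=-\int_{X_{m,\beta}}\overline{A}\,d\mu_A$.

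The substantive step is the lower bound: I claim $\int_{X_{m,\beta}}\log(\mathcal{L}_0(u)/u)\,d\mu_A \ge -\int_{X_{m,\beta}}\overline{A}\,d\mu_A$ for every $u\in\mathcal{C}^+(X_{m,\beta})$. Applying the invariance identity with $\varphi=\log u$ and with $\varphi=\overline{A}=\log g$, I would rewrite
\[
\int_{X_{m,\beta}}\log\frac{\mathcal{L}_0(u)}{u}\,d\mu_A + \int_{X_{m,\beta}}\overline{A}\,d\mu_A = \int_{X_{m,\beta}}\Phi_u(x)\,d\mu_A(x),
\]
where, denoting by $y_1,\dots,y_k$ the finitely many preimages of $x$ (by Lemma \ref{bigd}, $2\le k\le m+1$) and writing $u_i=u(y_i)$, $g_i=g(y_i)$,
\[
\Phi_u(x) = \log\Bigl(\sum_{i} u_i\Bigr) + \sum_{i} g_i \log\frac{g_i}{u_i}.
\]
Since $\sum_i g_i=\mathcal{L}_0(g)(x)=1$, the tuple $(g_i)_i$ is a probability vector; setting $S=\sum_i u_i$ and comparing $(g_i)_i$ with the probability vector $(u_i/S)_i$ via the non-negativity of the relative entropy gives $\sum_i g_i\log(g_i/u_i)\ge -\log S$, whence $\Phi_u(x)\ge 0$ for every $x$. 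Integrating proves the claim, and taking the infimum over $u$ yields the lower bound.

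Combining the two bounds gives $h(\mu_A)=-\int_{X_{m,\beta}}\overline{A}\,d\mu_A$, the infimum being attained at $u=e^{\overline{A}}$. The only point requiring care is the pointwise inequality $\Phi_u\ge 0$: the relative-entropy argument needs the preimage sums to be genuine finite probability vectors, which is exactly what Lemma \ref{bigd} guarantees, and the two applications of the invariance identity require $\log u$ and $\overline{A}$ to be integrable, which holds since both are continuous (hence bounded) on the compact space $X_{m,\beta}$ with $u>0$.
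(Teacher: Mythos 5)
Your proof is correct and takes essentially the same route as the paper's: both establish the upper bound by testing the infimum at $u = e^{\overline{A}}$ (using the normalization $\mathcal{L}_{\overline{A}}(1)=1$), and both obtain the lower bound from the invariance $\mathcal{L}^*_{\overline{A}}(\mu_A)=\mu_A$ combined with Jensen's inequality. Your pointwise Gibbs/relative-entropy inequality for $\Phi_u$ is exactly Jensen's inequality applied to the probability weights $e^{\overline{A}(y_i)}$ over the preimages, which the paper invokes after the substitution $u=\widetilde{u}e^{-\overline{A}}$; your version merely makes that step explicit (and, incidentally, avoids the sign slip in the paper's statement of the Jensen inequality).
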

\begin{proof}
Set $u_0 = e^{\overline{A}}$, so $u_0 \in \mathcal{C}^+(X_{m \beta})$. Since $\mathcal{L}^*_A(\mu_A) = \mu_A$, it follows that
\[
-\int_{X_{m, \beta}}\overline{A} d\mu_A 
= \int_{X_{m, \beta}}\log\left(\frac{\mathcal{L}_{\overline{A}}(1)}{u_0}\right) d\mu_A 
= \int_{X_{m, \beta}}\log\left(\frac{\mathcal{L}_0(u_0)}{u_0}\right) d\mu_A \,.
\]

On other hand, for any $\widetilde{u} \in \mathcal{C}^+(X_{m \beta})$, we have that $u = \widetilde{u}e^{-\overline{A}} \in \mathcal{C}^+(X_{m \beta})$ and $\mathcal{L}_0(\widetilde{u}) = \mathcal{L}_{\overline{A}}(u)$. Therefore,
\[
\log\left(\frac{\mathcal{L}_0(\widetilde{u})}{\widetilde{u}}\right) = \log\left(\frac{\mathcal{L}_{\overline{A}}(u)}{u}\right) - \overline{A} \,.
\]

From the above, integrating with respect to $\mu_A$, we get that
\[
\int_{X_{m, \beta}}\log\left(\frac{\mathcal{L}_0(\widetilde{u})}{\widetilde{u}}\right)d\mu_A 
=\int_{X_{m, \beta}}\log\left(\frac{\mathcal{L}_{\overline{A}}(u)}{u}\right)d\mu_A  - \int_{X_{m, \beta}} \overline{A} d\mu_A \,.
\]

Since $\overline{A}$ is a normalized potential, from the Jensen's inequality, it follows that $0 \geq \int_{X_{m, \beta}}\log(\mathcal{L}_{\overline{A}}(u))d\mu_A - \int_{X_{m, \beta}}\log(u)d\mu_A$. Therefore,  
\[
\int_{X_{m, \beta}}\log\left(\frac{\mathcal{L}_0(\widetilde{u})}{\widetilde{u}}\right)d\mu_A \geq - \int_{X_{m, \beta}} \overline{A} d\mu_A \,.
\]

In other words, we have 
\[
- \int_{X_{m, \beta}} \overline{A} d\mu_A = \inf_{u \in \mathcal{C}^+(X_{m, \beta})}\left\{\int_{X_{m, \beta}}\log\left(\frac{\mathcal{L}_0(u)}{u}\right)d\mu_A\right\} = h(\mu_A) \,.
\]

\end{proof}

The next Proposition shows that the Gibbs state found in Theorem \ref{theorem1} satisfies the variational principle. Note that the above implies that any Gibbs state $\mu_A$ associated to some H\"older continuous potential $A$ (defined on the symmetric $\beta$-shift) is in fact an equilibrium state. 

\begin{proposition}
Given $A \in \mathcal{H}_{\alpha}(X)$ the topological pressure $P(A)$ of the potential $A$ is defined by
\[
P(A) = \sup_{\mu \in \mathcal{M}_{\sigma}(X_{m, \beta})}\left\{ h(\mu) + \int_{X_{m, \beta}}A d\mu \right\} \,.
\]

Then, $P(A)= \log(\lambda_A) $, where $\lambda_A$ is the eigenvalue of the Ruelle operator.

The probability which attains the maximal value is $\mu_{\overline{A}} $, where 
$\overline{A}$
is associated to $A$ via the expression
(\ref{normalization}). It is also true that $\mu_{\overline{A}}=\mu_A = \psi_A d\rho_A$ (see 3) in the end of section 2).

\label{variational-principle}
\end{proposition}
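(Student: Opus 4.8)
The plan is to reduce the whole statement to the normalized potential $\overline{A}$ and to reuse the computation already carried out in Lemma \ref{variational-principle-entropy}. First I would record two elementary facts. Plugging the definition (\ref{normalization}) into the Ruelle operator and using $\mathcal{L}_A(\psi_A)=\lambda_A\psi_A$ shows that $\overline{A}$ is genuinely normalized, i.e. $\mathcal{L}_{\overline{A}}(1)=1$. Second, for every $\mu\in\mathcal{M}_{\sigma}(X_{m,\beta})$ the coboundary term integrates to zero, since $\int\log(\psi_A\circ\sigma)\,d\mu=\int\log\psi_A\,d\mu$ by $\sigma$-invariance (and $\log\psi_A$ is bounded because $\psi_A$ is strictly positive and continuous on a compact space); hence $\int A\,d\mu=\int\overline{A}\,d\mu+\log\lambda_A$. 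Consequently $P(A)=\log\lambda_A+\sup_{\mu}\{h(\mu)+\int\overline{A}\,d\mu\}$, and it remains only to prove that this last supremum equals $0$ and is attained at $\mu_A$.

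For the upper bound I would reuse the change of variables from the proof of Lemma \ref{variational-principle-entropy}: for any $\mu$ and any $\widetilde{u}\in\mathcal{C}^+(X_{m,\beta})$, setting $u=\widetilde{u}e^{-\overline{A}}$ gives $\mathcal{L}_0(\widetilde{u})=\mathcal{L}_{\overline{A}}(u)$ and therefore the identity $\int\log(\mathcal{L}_0(\widetilde{u})/\widetilde{u})\,d\mu+\int\overline{A}\,d\mu=\int\log(\mathcal{L}_{\overline{A}}(u)/u)\,d\mu$, which uses only algebra and is valid for every $\mu$. Taking the infimum over $u$ yields $h(\mu)+\int\overline{A}\,d\mu=\inf_{u\in\mathcal{C}^+(X_{m,\beta})}\int\log(\mathcal{L}_{\overline{A}}(u)/u)\,d\mu$. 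Evaluating the right-hand side at the constant function $u\equiv 1$ and using $\mathcal{L}_{\overline{A}}(1)=1$ gives the bound $\le\int\log 1\,d\mu=0$. Hence $h(\mu)+\int A\,d\mu\le\log\lambda_A$ for all $\mu$, that is $P(A)\le\log\lambda_A$.

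For the reverse inequality and the attainment, Lemma \ref{variational-principle-entropy} already gives $h(\mu_A)=-\int\overline{A}\,d\mu_A$, so $h(\mu_A)+\int\overline{A}\,d\mu_A=0$ and therefore $h(\mu_A)+\int A\,d\mu_A=\log\lambda_A$. Combined with the upper bound this proves $P(A)=\log\lambda_A$ with $\mu_A$ a maximizer. The identification is then immediate: applying the construction of Theorem \ref{theorem1} to the already normalized potential $\overline{A}$ produces $\lambda_{\overline{A}}=1$, $\psi_{\overline{A}}\equiv 1$ and $\rho_{\overline{A}}$ equal to the fixed point of $\mathcal{L}^*_{\overline{A}}$, so that $\mu_{\overline{A}}=\psi_{\overline{A}}\,d\rho_{\overline{A}}=\mu_A=\psi_A\,d\rho_A$.

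The part I expect to be the main obstacle is the \emph{uniqueness} of the maximizer. Any maximizing $\nu$ satisfies $\inf_{u}\int\log(\mathcal{L}_{\overline{A}}(u)/u)\,d\nu=0$ with the value attained at $u\equiv 1$; to conclude $\nu=\mu_A$ I would show that $u\equiv 1$ being a minimizer of this functional forces the first variation to vanish, i.e. $\int\mathcal{L}_{\overline{A}}(g)\,d\nu=\int g\,d\nu$ for every $g\in\mathcal{C}(X_{m,\beta})$, which is exactly $\mathcal{L}^*_{\overline{A}}\nu=\nu$; the uniqueness of that fixed point in Theorem \ref{theorem1} then gives $\nu=\mu_A$. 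Justifying the first-variation step (differentiation under the integral sign, and that $u\equiv 1$ is a genuine minimizer, equivalently the equality case of Jensen's inequality applied on each fiber with the weights $e^{\overline{A}}$) is the delicate point, and is where the strict concavity of $\log$ has to be invoked carefully.
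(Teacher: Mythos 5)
Your proof is correct and follows essentially the same route as the paper: the upper bound comes from choosing the test function $u = e^{\overline{A}}$ (which is exactly your choice $u \equiv 1$ after the change of variables) in the infimum defining $h(\mu)$, and both the value $\log(\lambda_A)$ and its attainment at $\mu_A$ come from Lemma \ref{variational-principle-entropy} plus the vanishing of the coboundary under invariant measures. The uniqueness of the maximizer that you flag as the main obstacle is not actually proved in the paper (its proof only shows that $\mu_A$ attains the supremum), so your core argument already covers everything the paper establishes, and the first-variation sketch can be omitted.
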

\begin{proof}
By Lemma \ref{variational-principle-entropy} we have 
\[
P(A) = \log(\lambda_A) = h(\mu_A) + \int_{X_{m, \beta}}A d\mu_A \,.\]

Besides that, for any $\mu \in \mathcal{M}_{\sigma}(X_{m, \beta})$, it is satisfied
\begin{align}
h(\mu) 
&= \inf_{u \in \mathcal{C}^+(X_{m, \beta})}\left\{\int_{X_{m, \beta}}\log\left(\frac{\mathcal{L}_0(u)}{u}\right)d\mu\right\} \nonumber \\
&\leq \int_{X_{m, \beta}}\log\left(\frac{\mathcal{L}_0(e^{\overline{A}})}{e^{\overline{A}}}\right)d\mu \nonumber \\
&= -\int_{X_{m, \beta}} \overline{A} d\mu \nonumber \\
&= -\int_{X_{m, \beta}} A d\mu + \log(\lambda_A) \nonumber \,.
\end{align}

\end{proof}

\section{Zero Temperature Limits for Entropies}
\label{zero-temperature-limit-section}

In section \ref{pre} we presented a notion of entropy for $\sigma$-invariant probability measures defined on symmetric $\beta$-shifts. Besides that, we showed in last section  that it is satisfied a variational principle and the supremum of the variational equation of pressure is attained at the Gibbs state associated to the potential $A$, which assures that any Gibbs state it is an equilibrium state as well. In this section, we are going to present the proof of Theorem \ref{theorem3} using the variational principle considered in Proposition \ref{variational-principle}. This Theorem guarantees that the function assigning to each $t > 0$ the value $h(\mu_{tA})$ is continuous at infinity,  which is known as zero temperature limit for the entropies of the equilibrium states.

General results on maximizing probabilities can be found in \cite{MR1855838}, \cite{G1} and \cite{BLL}.

\begin{proof}[Proof of Theorem \ref{theorem3}] 
Note that any accumulation point of the family $(\mu_{tA})_{t > 0}$ in the weak* topology is an $A$-maximizing probability measure. Indeed, for any $\mu \in \mathcal{M}_{\sigma}(X_{m, \beta})$ and each $t > 0$, it is satisfied 
\begin{equation}
\frac{1}{t}h(\mu_{tA}) + \int_{X_{m, \beta}}A d\mu_{tA} \geq \frac{1}{t}h(\mu) + \int_{X_{m, \beta}}A d\mu \,. 
\label{asymptotic-pressure}
\end{equation}

Let $\mu_{\infty}$ be an accumulation point at $\infty$ of the family $(\mu_{tA})_{t > 0}$. Then, there exists an increasing sequence of positive real numbers $(t_n)_{n \in \mathbb{N}}$, such that, $\lim_{n \in \mathbb{N}} t_n = +\infty$ and $\lim_{n \in \mathbb{N}} \mu_{t_nA} = \mu_{\infty}$ in the weak* topology. Then, taking the limit, when $n \to \infty$, in (\ref{asymptotic-pressure}) and using the fact that $h(\mu_{t_nA}) \leq h(X_{m, \beta}) < \infty$, for all $n \in \mathbb{N}$, we get that 
\begin{align}
\int_{X_{m, \beta}}A d\mu_{\infty} 
&= \lim_{n \in \mathbb{N}}\left(\frac{1}{t_n}h(\mu_{t_nA}) + \int_{X_{m, \beta}}A d\mu_{t_nA}\right) \nonumber \\
&\geq \lim_{n \in \mathbb{N}}\left(\frac{1}{t_n}h(\mu) + \int_{X_{m, \beta}}A d\mu\right) \nonumber \\
&= \int_{X_{m, \beta}}A d\mu \nonumber \,.
\end{align}

The above implies that $m(A) = \int_{X_{m, \beta}}A d\mu_{\infty}$. On other hand, since for each cylinder $[i]$, with $i \in \mathcal{A}_m$, it is true that $\partial[i] = \emptyset$, then, the map $\mu \mapsto h(\mu)$ is upper semicontinuous. Thus, by compactness of $X_{m, \beta}$, it is guaranteed that $\mathcal{M}_{\max}(A)$ is a compact set as well. Therefore, there exists $\widehat{\mu} \in \mathcal{M}_{\sigma}(X_{m, \beta})$, such that, $h(\mu) \leq h(\widehat{\mu})$, for all $\mu \in \mathcal{M}_{\sigma}(X_{m, \beta})$. 
 
Note that (\ref{asymptotic-pressure}) implies that
\[
P(tA) = tm(A) + h(X_{m, \beta}) + o(t) \,. 
\]

That is, the topological pressure has an asymptote that depends of $m(A)$. This implies that 
\[
h(\widehat{\mu}) \leq h(X_{m, \beta}) + o(t) \,.
\] 

On other hand, by Proposition \ref{variational-principle}, we have 
\[
h(X_{m, \beta}) + o(t) = P(tA) - tm(A) \leq h(\mu_{tA}) \,.
\]

Therefore, 
\[
h(\widehat{\mu}) \leq \limsup_{t \to \infty} (h(X_{m, \beta}) + o(t)) \leq \limsup_{t \to \infty} h(\mu_{t A}) \leq h(\mu_{\infty}) \leq h(\widehat{\mu}) \,.
\]

Using again Proposition \ref{variational-principle}, we obtain that
\[
h(\mu_{tA}) \geq P(tA) - tm(A) \geq h(\widehat{\mu}) \,.
\]

The foregoing implies that for any $n \in \mathbb{N}$ it is satisfied the inequality
\[
\inf_{t \geq t_n} h(\mu_{t_n A}) \geq h(\widehat{\mu}) \,.
\]

Then, taking the limit when $n \to \infty$, we get
\[
\liminf_{t \to \infty} h(\mu_{tA}) \geq h(\widehat{\mu}) = \limsup_{t \to \infty} h(\mu_{t A}) \,. 
\]
\end{proof}

\section{Appendix -The proof of the Ruelle's Perron-Frobenius Theorem}
\label{RPF-theorem-section}

\begin{proof}[Proof of Theorem \ref{theorem1}]
In order to prove the Theorem \ref{theorem1} we will analyze first some properties of the following collection of functions 
\[
\Gamma := \{\psi \in \mathcal{C}(X_{m, \beta}) : 0 \leq \psi \leq 1, \log(\psi) \in \mathcal{H}_{\alpha}(X_{m, \beta})\} \,.
\]

\medskip

We will show first the existence of the eigenfunction. We assume that the parameters $m$ and $\beta$ are such that the action of the shift is transitive and the Ruelle operator is well defined.
The proof is quite similar to the one in \cite{MR1085356}. We just outline some of the steps.

\medskip

Note that the above $\Gamma$ is convex, because it is satisfied $\psi(x) \leq \psi(y)e^{\mathrm{Hol}_A d(x, y)^{\alpha}}$, for all $\psi \in \Gamma$. Moreover, the above inequality implies that
\begin{align}
|\psi(x) - \psi(y)| 
&\leq \|\psi\|_{\infty}\left(e^{\mathrm{Hol}_A d(x, y)^{\alpha}} - 1\right) \nonumber \\
&\leq \|\psi\|_{\infty} \mathrm{Hol}_A d(x, y)^{\alpha} e^{\mathrm{Hol}_A d(x, y)^{\alpha}} \nonumber \\
&\leq \|\psi\|_{\infty} \mathrm{Hol}_A e^{\mathrm{Hol}_A}d(x, y)^{\alpha} \nonumber \,.
\end{align} 

Therefore, $\Gamma$ is contained in $\mathcal{H}_{\alpha}(X_{m, \beta})$ and the same inequality implies that $\Gamma$ is an equicontinuous and uniformly bounded collection of functions, which implies that $\Gamma$ is uniformly compact by Arzela-Ascoli's Theorem.

Now, for each $k \in \mathbb{N}$ we define the operator $L_k$ from $\Gamma$ into $\Gamma$ by the following expression 
\[
L_k(\psi) = \frac{\mathcal{L}_A(\psi + 1/k)}{\|\mathcal{L}_A(\psi + 1/k)\|_{\infty}}\,.
\]

Since all the constant functions taking values in $[0, 1]$ belongs to the set $\Gamma$, by linearity of the Ruelle operator, we get that $L_k(\psi) \in \Gamma$, for any $\psi \in \Gamma$. Besides that, $\|L_k(\psi)\|_{\infty} = 1$, for all $k \in \mathbb{N}$. Thus, using convexity and uniformly compactness of $\Gamma$, it is guaranteed the existence of a fixed point $\psi_k$ for $L_k$ by Schauder-Tychonoff's Theorem. So,
\[
\mathcal{L}_A(\psi_k + 1/k) = \psi_k \|\mathcal{L}_A(\psi_k + 1/k)\|_{\infty} \,.
\]

Using again that $\Gamma$ is uniformly compact, we obtain that the sequence $(\psi_k)_{k \in \mathbb{N}}$ has an accumulation point $\psi_A$ with the uniform norm. Then, by continuity of $\mathcal{L}_A$ we have
\[
\mathcal{L}_A(\psi_A) = \psi_A \|\mathcal{L}_A(\psi_A)\|_{\infty} \,.
\] 

Hereafter, the last term in the right side of the equation will be denoted by $\lambda_A$. Observe that $\lambda_k = \|\mathcal{L}_A(\psi_k + 1/k)\|_{\infty}$ satisfies $\lim_{k \in \mathbb{N}} \lambda_k = \lambda_A$. Moreover, 
\[
\lambda_k \psi_k = \sum_{\sigma(y) = x}e^{A(y)}(\psi_k(y) + 1/k) \geq (\inf_{k \in \mathbb{N}}(\psi_k) + 1/k)e^{-\|A\|_{\infty}} \,.
\]

By the above, we obtain that $\lambda_k \inf_{k \in \mathbb{N}}(\psi_k) \geq (\inf_{k \in \mathbb{N}}(\psi_k) + 1/k)e^{-\|A\|_{\infty}}$. Therefore, we can conclude that $\lambda_k > e^{-\|A\|_{\infty}}$, for all $k \in \mathbb{N}$, which implies that $\lambda_A > 0$.  

On other hand, if there exists some point $x \in X_{m, \beta}$, such that, $\psi_A(x) = 0$, then,  it is true that 
\begin{equation}
0 = \lambda^n_A \psi_A(x) = \mathcal{L}^n_A(\psi_A)(x) = \sum_{\sigma^n(y) = x}e^{S_n A(y)} \psi_A(y) \,.
\label{density}
\end{equation}

In other words, $\psi_A(y) = 0$, for all $y \in \sigma^{-n}(\{x\})$. Since the quasi-greedy $\beta$-expansion of $1$ satisfies (\ref{irreducible-sequence}), it follows that $X_{m, \beta}$ is topologically transitive, which implies that the set $\{y : y \in \sigma^{-n}(\{x\})\}$ is dense in $X_{m, \beta}$. Then, by continuity of $\psi_A$, we can conclude that $\psi_A \equiv 0$, which is a contradiction taking into account that $\lambda_A > 0$. 

\medskip

The proofs that the  eigenvalue is simple and the other properties are similar to the ones in \cite{MR1085356}.

\end{proof}

\end{document}